\newcommand{\opnm}{\operatorname}
\newcommand{\BigO}{\mathcal{O}}
\newcommand{\Hm}[1]{\leavevmode{\marginpar{\tiny%
$\hbox to 0mm{\hspace*{-0.5mm}$\leftarrow$\hss}%
\vcenter{\vrule depth 0.1mm height 0.1mm width \the\marginparwidth}%
\hbox to
0mm{\hss$\rightarrow$\hspace*{-0.5mm}}$\\\relax\raggedright #1}}}
\newtheorem{theorem}{Theorem}
\newtheorem*{theorem*}{Theorem}
\newtheorem{corollary}[theorem]{Corollary}
\theoremstyle{definition}
\theoremstyle{remark}
\newtheorem*{remark*}{Remark}
\newtheorem{example}[theorem]{Example}
\title[SVDs of certain solution operators]{Singular-value decomposition of solution operators to model evolution equations}
\author{Alexandru Aleman}
\email{aleman@maths.lth.se}
\address{Lund University, Mathematics, Faculty of Science, P.O. Box 118, S-221 00 Lund, Sweden}
\author{Joe Viola}
\email{Joseph.Viola@univ-nantes.fr}
\address{Laboratoire de Math\'ematiques J. Leray, UMR 6629 du CNRS, Universit\'e de Nantes, 2, rue de la
Houssini\`ere, 44322 Nantes Cedex 03, France}
\begin{document}

\begin{abstract}
We consider evolution equations generated by quadratic operators admitting a decomposition in creation-annihilation operators without usual ellipticity-type hypotheses; this class includes hypocoercive model operators.  We identify the singular value decomposition of their solution operators with the evolution generated by an operator of harmonic oscillator type, and in doing so derive exact characterizations of return to equilibrium and regularization for any complex time.
\end{abstract}

\maketitle

\section{Introduction}

We consider evolution equations given by operators acting on $L^2(\Bbb{R}^n)$ which can be decomposed into the annihilation and creation operators $A_j = \partial_{x_j} + x_j$ and $A_j^* = -\partial_{x_j} + x_j$. These operators are motivated by hypocoercive models; see Example \ref{ex.FP} below. For $M = (m_{jk})_{j,k=1}^n \in \Bbb{M}_{n\times n}(\Bbb{C})$, let
\begin{equation}\label{eq.def.P}
	P = \frac{1}{2}\sum_{j,k=1}^n m_{jk}A_k^* A_j.
\end{equation}
We present weak solutions to the problem
\begin{equation}\label{eq.evolution}
	\left\{\begin{array}{l} \partial_t u(t,x) + Pu(t,x) = 0,
	\\ u(0,x) = u_0(x) \in L^2(\Bbb{R}^n). \end{array}\right.
\end{equation}
Our goal is to precisely describe the norm of these weak solutions in a way which makes clear how the eigenvalues of $P$ determine the long-time behavior of these solutions.

We emphasize that we make no assumptions on the sign of $\Re \langle u, Pu\rangle$ or on any other version of ellipticity, assumptions which have been relied on previously even to define a solution to the evolution equation.  Nonetheless, in this much broader setting, we are able to establish a simple exact description of the decay and regularizing effects of these solutions. This sharpens and extends previous results on return to equilibrium and how long-term behavior is governed by the spectrum of the operator, and it puts these phenomena in an elementary dynamical setting.

The present work is a special case of a more general analysis of evolution equations generated by quadratic operators \cite{AlVi2014b}.  In particular, the special creation-annihilation operator form considered here is not stable under perturbations in the coefficients of a quadratic operator, but the more general analysis is applicable under a much weaker hypothesis.

We remark that necessary and sufficient conditions for a quadratic operator to admit such a creation-annihilation operator decomposition, after a unitary equivalence, may deduced from \cite[Thm.~1.4]{Vi2013} and \cite[Sec.~4]{AlVi2014b}.

Let $\{h_\alpha\}_{\alpha \in \Bbb{N}^n}$ be the orthonormal basis for $L^2(\Bbb{R}^n)$ formed by the Hermite functions, which may be realized as
\begin{equation}\label{eq.def.Hermite}
	h_\alpha(x) = \frac{1}{\sqrt{2^{|\alpha|} \alpha! \sqrt{\pi^n}}}(A^*)^\alpha e^{-x^2/2}
\end{equation}
for $(A^*)^\alpha = (A_1^*)^{\alpha_1}\cdots (A_n^*)^{\alpha_n}$ defined via the multi-index $\alpha$ and the $n$-vector of creation operators. An induction argument and the commutator identity $[A_j, A_k^*] = 2\delta_{jk}$ shows that
\begin{equation}\label{eq.cr.ann.hermite}
	A_k^* A_j h_\alpha = 2\sqrt{\alpha_j\alpha_k}h_{\alpha - e_j + e_k},
\end{equation}
with $\alpha - e_j + e_k$ obtained by decreasing the $j$th index and increasing the $k$th index of $\alpha$. We recall that therefore the Hermite functions diagonalize the standard harmonic oscillator
\begin{equation}\label{eq.def.Q0}
	Q_0 = \frac{1}{2}\sum_{j=1}^n A_j^* A_j = \frac{1}{2}(-\Delta + |x|^2 - n),
\end{equation}
here chosen so that $\opnm{Spec} Q_0 = \Bbb{N}$:
\[
	Q_0 h_\alpha = |\alpha|h_\alpha, \quad \forall \alpha \in \Bbb{N}^n.
\]

We note that
\[
	\begin{aligned}
	2[Q_0, A_k^*A_j] &= \sum_{\ell = 1}^n [A_\ell^*A_\ell, A_k^* A_j]
	\\ &= \sum_{\ell = 1}^n \left(A_\ell^*[A_\ell, A_k^*] A_j + A_k^*[A_\ell^*, A_j] A_\ell\right)
	\\ &= \sum_{\ell = 1}^n 2\left(\delta_{\ell k} A_\ell^* A_j - \delta_{\ell j}A_k^* A_\ell\right)
	\\ &= 0,
	\end{aligned}
\]
and therefore $P$ commutes with $Q_0$. We define the natural energy subspaces
\begin{equation}\label{eq.def.Em}
	E_m = \opnm{span}\{h_\alpha \::\: |\alpha| = m\} = \ker(Q_0 - m),
\end{equation}
which are $P$-invariant since they are eigenspaces of an operator which commutes with $P$.  By putting $P|_{E_m}$ into Jordan normal form, it is therefore elementary that $P$ has a family of generalized eigenfunctions with span identical to that of the Hermite functions, which is dense in $L^2(\Bbb{R}^n)$.  These eigenfunctions generally are not orthogonal, however, and their associated projections grow up to exponentially rapidly \cite[Cor.~1.6]{Vi2013}.

\begin{example}\label{ex.FP}
The Fokker-Planck model \cite[Sec.~5.5.1]{HeNiBook} for a quadratic potential
\[
	P_a = \frac{1}{2}\left(x_2^2 - \partial_{x_2}^2-1\right) + a\left(x_1\partial_{x_2} - x_2\partial_{x_1}\right)
\]
is of the form \eqref{eq.def.P} with
\[
	M_a = \left(\begin{array}{cc} 0 & -a \\ a & 1\end{array}\right).
\]
\end{example}

\section{Results}

Our main result is an exact simple description of the norm behavior of a weak solution to \eqref{eq.evolution} relying only on the singular value decomposition of the invertible matrix $e^{-tM}$.

\begin{theorem*}\label{thm.only}
Let $P$ be as in \eqref{eq.def.P}, fix $t \in \Bbb{C}$, and let 
\[
	\begin{aligned}
	0 < \sigma_1 \leq \sigma_2 \leq \dots \leq \sigma_n,
	\\ \{\sigma_j^2\}_{j=1}^n = \opnm{Spec}((e^{-t M})^*e^{-t M}),
	\end{aligned}
\]
be the singular values of $e^{-t M}$, repeated for multiplicity.

There exists a closed densely defined operator, which we denote $\exp(-tP)$, extending the solution to \eqref{eq.evolution} defined on $\opnm{span}\{h_\alpha\}_{\alpha \in \Bbb{N}^n}$. There exist furthermore two unitary operators $\mathcal{U}_1$ and $\mathcal{U}_2$ on $L^2(\Bbb{R}^n)$, which preserve each $E_m$ from \eqref{eq.def.Em}, reducing $\exp(-tP)$ to a solution operator corresponding to a sum of harmonic oscillators: specifically,
\begin{equation}\label{eq.SVD}
	\mathcal{U}_1 \exp(-tP) \mathcal{U}_2^* = \exp\left(\frac{1}{2}\sum_{j=1}^n (\log \sigma_j)(x_j^2 - \frac{\partial^2}{\partial x_j^2} - 1)\right).
\end{equation}
\end{theorem*}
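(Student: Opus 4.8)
The plan is to exploit the fact that $P$ encodes the action of the matrix $M$ on the creation operators: conjugation by the (formal) solution operator transforms the $A_j^*$ exactly by the matrix $e^{-tM}$, and the singular value decomposition of this finite invertible matrix descends, through a second-quantization functor, to the singular value decomposition of $\exp(-tP)$. The essential point is that the \emph{two-sided} matrix factorization $e^{-tM}=V\Sigma W^*$ is implemented by two \emph{different} unitaries $\mathcal{U}_1,\mathcal{U}_2$ on $L^2(\Bbb{R}^n)$, each preserving every $E_m$ from \eqref{eq.def.Em}.

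First I would record the infinitesimal relations. A direct computation from $[A_j,A_k^*]=2\delta_{jk}$ gives $[P,A_j^*]=\sum_{q}m_{jq}A_q^*$, while the ground state $h_0$ (i.e.\ \eqref{eq.def.Hermite} with $\alpha=0$) satisfies $A_jh_0=0$ and hence $Ph_0=0$. Since $P$ preserves each finite-dimensional space $E_m$, the operator $\exp(-tP)$ is unambiguously defined on the core $\mathcal{D}:=\opnm{span}\{h_\alpha\}$ by block-diagonal matrix exponentials, and on $\mathcal{D}$ the relation above integrates to the key conjugation law
\begin{equation}\label{eq.conj}
	\exp(-tP)\,A_j^*\,\exp(tP)=\sum_{q=1}^n(e^{-tM})_{jq}\,A_q^*,\qquad \exp(-tP)h_0=h_0.
\end{equation}
Because the $A_j^*$ commute and generate the $h_\alpha$ from $h_0$, any invertible operator $T$ on $\mathcal{D}$ is completely determined by $Th_0$ together with the conjugations $TA_j^*T^{-1}$; this determination principle is what turns \eqref{eq.conj} into an explicit action.

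Next I would construct the second-quantization of $U(n)$. For skew-Hermitian $X$ the operator $P(X):=\frac12\sum_{jk}X_{jk}A_k^*A_j$ satisfies $P(X)^*=P(X^*)=-P(X)$ and restricts to a skew-Hermitian matrix on each $E_m$; hence $\mathcal{M}(e^X):=\exp(P(X))$ is unitary, fixes $h_0$, preserves every $E_m$, and obeys $\mathcal{M}(U)A_j^*\mathcal{M}(U)^{-1}=\sum_q U_{jq}A_q^*$. Writing the singular value decomposition of the invertible matrix as $e^{-tM}=V\Sigma W^*$ with $V,W\in U(n)$ and $\Sigma=\opnm{diag}(\sigma_1,\dots,\sigma_n)$, I set $\mathcal{U}_1=\mathcal{M}(W)$ and $\mathcal{U}_2=\mathcal{M}(V)$, so that $\mathcal{U}_2^*=\mathcal{M}(V^*)$. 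Composing the two conjugation laws with \eqref{eq.conj} and using $V^*e^{-tM}W=\Sigma$, the operator $G:=\mathcal{U}_1\exp(-tP)\mathcal{U}_2^*$ fixes $h_0$ and conjugates each $A_j^*$ to $\sigma_jA_j^*$. Since $A_j^*A_j=x_j^2-\partial_{x_j}^2-1$ and $[\frac12\sum_k(\log\sigma_k)A_k^*A_k,\,A_j^*]=(\log\sigma_j)A_j^*$, the right-hand side $D_0$ of \eqref{eq.SVD} does exactly the same: it fixes $h_0$ and conjugates $A_j^*$ to $\sigma_jA_j^*$. By the determination principle, $G=D_0$ on $\mathcal{D}$, which is \eqref{eq.SVD} on the core; explicitly $D_0h_\alpha=(\prod_j\sigma_j^{\alpha_j})h_\alpha$.

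Finally I would upgrade to closed operators: $D_0$ is a positive diagonal operator in the Hermite basis, hence essentially self-adjoint on $\mathcal{D}$ with self-adjoint positive closure $\overline{D_0}$, so $\exp(-tP):=\mathcal{U}_1^*\overline{D_0}\,\mathcal{U}_2$ is the desired closed densely defined extension, and reading off this decomposition yields \eqref{eq.SVD} with singular values $\{\prod_j\sigma_j^{\alpha_j}\}$ and singular vectors $\mathcal{U}_2^*h_\alpha$, $\mathcal{U}_1^*h_\alpha$. The main obstacle is conceptual rather than computational: because $e^{-tM}$ is in general non-normal it cannot be diagonalized by a single unitary conjugation, so $\exp(-tP)$ cannot be reduced by conjugating with one metaplectic operator; recognizing that the correct device is the two-sided factorization $e^{-tM}=V\Sigma W^*$, and then verifying that the heuristic conjugation identities for the unbounded operators $\exp(\pm tP)$ are rigorous precisely because they localize to the finite-dimensional invariant subspaces $E_m$, is where the real work lies.
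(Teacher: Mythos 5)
Your proof is correct, and it takes a genuinely different route from the paper. The paper conjugates by the Bargmann transform, turning $P$ into $(Mz)\cdot\partial_z$ on Fock space, where $\exp(-tP)$ becomes the explicit composition operator $v\mapsto v(e^{-tM}z)$; the SVD $e^{-tM}=U_2\Sigma U_1^*$ is then implemented by the substitution unitaries $\mathcal{V}_U v(z)=v(Uz)$, and closedness is read off from the maximal domain $\{v\in\mathcal{F}\,:\,v(e^{-tM}z)\in\mathcal{F}\}$ using the fact that $\mathcal{F}$-convergence implies pointwise convergence of holomorphic functions. You instead stay entirely in $L^2(\Bbb{R}^n)$ and argue algebraically: the Heisenberg-picture law $\exp(-tP)A_j^*\exp(tP)=\sum_q(e^{-tM})_{jq}A_q^*$ (rigorous because $A_j^*$ maps $E_m$ to $E_{m+1}$ and each block is finite-dimensional), the second quantization $\mathcal{M}(e^X)=\exp(P(X))$ of unitary matrices, and the determination principle from $\exp(-tP)h_0=h_0$; all your computations check out, including $[P,A_j^*]=\sum_q m_{jq}A_q^*$ and $V^*e^{-tM}W=\Sigma$ giving $GA_j^*G^{-1}=\sigma_jA_j^*$. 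In fact your $\mathcal{M}(U)$ is exactly the paper's $\mathcal{B}^*\mathcal{V}_U\mathcal{B}$ (both fix $h_0$ and conjugate the vector of creation operators by $U$), so the two constructions produce the same unitaries. One point worth a sentence in a polished writeup: you define $\exp(-tP)$ as $\mathcal{U}_1^*\overline{D_0}\,\mathcal{U}_2$, i.e.\ via the \emph{closure} of the blockwise operator, while the paper uses a maximal-type domain; these a priori could differ (the paper explicitly defers the question of whether the eigenfunctions form a core to \cite{AlVi2014b}), but here they coincide because the closure of a diagonal operator on the span of an orthonormal basis is the maximal diagonal operator, and unitary conjugation preserves this — so the theorem's existence claim is met either way. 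What each approach buys: the paper's composition-operator realization is reused verbatim in the later corollaries (uniqueness of holomorphic solutions, and the smoothing statement via $v\mapsto v(e^{\delta-tM}z)$), whereas your argument is transform-free and more elementary, with all analysis localized to the finite-dimensional invariant subspaces $E_m$, at the cost of losing the explicit function-space model of the solution operator.
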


\begin{remark*}
We say that this is a singular value decomposition of the solution operator $\exp(-tP)$ because the operator 
\begin{equation}\label{eq.def.Q.sigma}
	Q = Q(\sigma_1,\dots, \sigma_n) = \frac{1}{2}\sum_{j=1}^n(\log \sigma_j)(x_j^2 - \frac{\partial^2}{\partial x_j^2} - 1)
\end{equation}
exponentiated on the right-hand side of \eqref{eq.SVD} is self-adjoint and diagonal with respect to the Hermite basis:
\[
Q h_\alpha = \left(\sum_{j=1}^n(\log \sigma_j)\alpha_j\right)h_\alpha.
\]
Therefore $\exp Q$ is self-adjoint, positive definite, and diagonal with respect to the Hermite basis.
\end{remark*}

\begin{proof}
The main tool here is the classical Bargmann transform; to fix a reference, we refer the reader to \cite[Sec.~1.6,~1.7]{FoBook}, but some changes of variables are necessary to adjust for factors of $\sqrt{\pi}$. We recall that the Bargmann transform
\[
	\mathcal{B}f(z) = \pi^{-3n/4}\int_{\Bbb{R}^n} f(x)e^{\sqrt{2}xz-x^2/2 - z^2/2}\,dx
\]
is a unitary map from $L^2(\Bbb{R}^n)$ to the Fock space $\mathcal{F}$ consisting of holomorphic functions of $n$ variables for which the norm
\[
	\|v\|_\mathcal{F} = \left(\int_{\Bbb{C}^n} |v(z)|^2 e^{-|z|^2}\,dL(z)\right)^{1/2}
\]
is finite. Here $dL(z)$ is Lebesgue measure on $\Bbb{C}^n \sim \Bbb{R}^n_{\Re z} \times \Bbb{R}^n_{\Im z}$.

We have that
\[
	\mathcal{B}A_j\mathcal{B}^* = \sqrt{2}\partial_{z_j}\textnormal{  and  }\mathcal{B}A_j^*\mathcal{B}^* = \sqrt{2}z_j,
\]
where the derivative is holomorphic in the former and the latter is a multiplication operator. Therefore, for $P$ in \eqref{eq.def.P}, we have
\begin{equation}\label{eq.def.P.Fock}
	\mathcal{B}P\mathcal{B}^* = (Mz)\cdot \partial_z.
\end{equation}
Furthermore,
\[
	\mathcal{B}h_\alpha = \frac{1}{\sqrt{\pi^n \alpha!}}z^\alpha,
\]
so the $E_m$ in \eqref{eq.def.Em} map to spaces of homogeneous polynomials
\[
	\mathcal{E}_m := \mathcal{B}E_m = \opnm{span}\{z^\alpha \::\: |\alpha| = m\}.
\]

Calculating that $\partial_z (u(Fz)) = F^{\top} (\partial_z u)(Fz)$ for any matrix $F \in \Bbb{M}_{n\times n}(\Bbb{C})$, we have that
\begin{equation}\label{eq.solve.Fock}
	v(t,z) = \exp(-t(Mz)\cdot \partial_z)v_0(z) := v_0(e^{-tM}z)
\end{equation}
solves
\begin{equation}\label{eq.evolution.Fock}
	\left\{\begin{array}{l} \partial_t v(t,x) + ((Mz)\cdot \partial_z)v(t,x) = 0,
	\\ v(0,x) = v_0(x) \in \mathcal{F}. \end{array}\right.
\end{equation}
The solution is unique in the space of holomorphic functions because any solution must obey $\partial_t(v(t, e^{-tM}z)) = 0$ and therefore $v(t,e^{-tM}z) = v_0(z)$.  Therefore the solution agrees with the realization as a matrix exponential on any element of $\opnm{span}\{\mathcal{B}h_\alpha\} = \opnm{span}\{z^\alpha\}$, the polynomials.  This operator $\exp(-t(Mz)\cdot \partial_z)$ is densely defined because the polynomials are dense in $\mathcal{F}$; equivalently, the Hermite functions have dense span in $L^2(\Bbb{R}^n)$. It has a closed graph when equipped with the domain
\[
	\operatorname{Dom}(\exp(-t(Mz\cdot\partial_z))) = \{v \in \mathcal{F} \::\: v(e^{-tM}z) \in \mathcal{F}\}
\]
because convergence in $\mathcal{F}$ implies convergence in $L^2_{\textnormal{loc}}(\Bbb{C}^n)$ which implies pointwise convergence for holomorphic functions. Therefore, whenever $\{v_k\}_{k\in \Bbb{N}}$ is a sequence in $\operatorname{Dom}(\exp(-t(Mz\cdot\partial_z)))$ for which
\[
	(v_k, \exp(-t(Mz)\cdot \partial_z)v_k) \to (v_\infty, w)
\]
in $\mathcal{F}\times \mathcal{F}$, we have that $w(z) = v_\infty(e^{-tM}z) = \exp(-t(Mz)\cdot \partial_z)v_\infty(z)$.

Naturally, we define
\[
	\exp(-tP) = \mathcal{B}^*\exp(-t(Mz)\cdot \partial_z)\mathcal{B}.
\]

A change of variables makes it clear that, whenever $U \in \Bbb{M}_{n\times n}(\Bbb{C})$ is a unitary matrix,
\[
	\mathcal{V}_Uv(z) = v(Uz)
\]
is a unitary transformation on $\mathcal{F}$ preserving each $\mathcal{E}_m$.  Furthermore, $\mathcal{V}_U^* = \mathcal{V}_{U^*}$. For $t \in \Bbb{C}$ fixed, by the singular value decomposition for $e^{-tM}$ we let $U_1, U_2$ be unitary matrices such that
\[
	e^{-tM} = U_2\Sigma U_1^*
\]
for $\Sigma$ the diagonal matrix with entries $\sigma_1,\dots, \sigma_n$.

Then
\[
	\begin{aligned}
	\mathcal{V}_{U_1}\exp(-t(Mz)\cdot \partial_z)\mathcal{V}_{U_2}^*v_0(z) &= v_0(U_2^* e^{-tM}U_1 z)
	\\ &= v_0(\Sigma z)
	\\ &= \exp((\log \Sigma)z\cdot \partial_z)v_0(z),
	\end{aligned}
\]
where the formula for $\exp((\log \Sigma)z\cdot \partial_z)$ comes from the same reasoning which revealed that \eqref{eq.solve.Fock} solves \eqref{eq.evolution.Fock}.  But by \eqref{eq.def.P.Fock} we have that 
\[
	\mathcal{B}^*\left((\log \Sigma)z \cdot \partial_z \right)\mathcal{B} = \frac{1}{2}\sum_{j=1}^n (\log \sigma_j)(x_j^2 - \frac{\partial^2}{\partial x_j^2} - 1).
\]
Therefore, writing $\mathcal{U}_j = \mathcal{B}^* \mathcal{V}_{U_j}\mathcal{B}$ and noting that the operators $\mathcal{U}_j$ preserve the spaces $E_m$ because the operators $\mathcal{V}_{U_j}$ preserve the spaces $\mathcal{E}_m$, we have proven the theorem.
\end{proof}

We continue by proving several immediate consequences, the first of which is the identification of the eigenvectors of $P$. These form a complete system, meaning that their span is dense in $L^2(\Bbb{R}^n)$.

\begin{corollary}\label{cor.spectrum}
Let $\{\lambda_j\}_{j=1}^n$ be the eigenvalues of $M$, repeated for algebraic multiplicity. Let $G$ be such that $GMG^{-1}$ is in Jordan normal form, therefore having diagonal entries $\lambda_1,\dots,\lambda_n$.  Then
\[
	\left\{\mathcal{B}^*\left((Gz)^\alpha\right)\right\}_{\alpha \in \Bbb{N}^n}
\]
forms a complete system of eigenfunctions of $P$ in \eqref{eq.def.P} with corresponding generalized eigenvalues 
\[
	\lambda_\alpha = \sum_{j=1}^n \lambda_j\alpha_j.
\]
\end{corollary}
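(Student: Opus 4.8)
The plan is to move the whole problem to the Fock space $\mathcal{F}$, where by \eqref{eq.def.P.Fock} the operator $P$ is unitarily equivalent to the first-order operator $(Mz)\cdot\partial_z$. It then suffices to produce a complete system of generalized eigenfunctions of $(Mz)\cdot\partial_z$ in $\mathcal{F}$ and to pull them back by the unitary $\mathcal{B}^*$. The guiding idea is that a \emph{linear} change of variables converts $(Mz)\cdot\partial_z$ into the operator attached to the Jordan form of $M$, on which monomials behave as (generalized) eigenfunctions.

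First I would use the identity $\partial_z(v(Gz)) = G^\top (\partial_z v)(Gz)$, already exploited in the proof of the theorem, to verify that under the substitution $w = Gz$ one has
\[
	(Mz)\cdot\partial_z = (Nw)\cdot\partial_w, \qquad N := GMG^{-1}, \quad w = Gz.
\]
Since $z \mapsto Gz$ preserves the homogeneity degree, it is then enough to analyze how $(Nw)\cdot\partial_w$ acts on the monomials $w^\alpha$, $|\alpha| = m$, inside each $\mathcal{E}_m$; the eigenfunctions $\mathcal{B}^*((Gz)^\alpha)$ of the statement are exactly the pullbacks of $w^\alpha = (Gz)^\alpha$.

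Next I would split the Jordan matrix as $N = D + R$ with $D = \opnm{diag}(\lambda_1,\dots,\lambda_n)$ and $R$ its nilpotent superdiagonal part. The diagonal piece gives $(Dw)\cdot\partial_w \, w^\alpha = (\sum_j \lambda_j \alpha_j)\,w^\alpha = \lambda_\alpha w^\alpha$, so $w^\alpha$ is an honest eigenfunction whenever $M$ is diagonalizable. The crucial structural point is that $R$ only links indices lying in a common Jordan block, where the corresponding eigenvalues coincide; hence $(Rw)\cdot\partial_w$ sends $w^\alpha$ to a combination of monomials $w^{\alpha - e_k + e_{k+1}}$, each of total degree $m$ and each with the same value $\lambda_{\alpha - e_k + e_{k+1}} = \lambda_\alpha$. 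Grouping the monomial basis of $\mathcal{E}_m$ by the value of $\lambda_\beta$, the operator $(Nw)\cdot\partial_w - \lambda_\alpha$ is therefore nilpotent on the span of those monomials sharing the value $\lambda_\alpha$, since the weighted degree $\sum_k k\beta_k$ strictly increases under each application of $(Rw)\cdot\partial_w$ while remaining bounded on $\mathcal{E}_m$. This exhibits $(Gz)^\alpha$ as a generalized eigenfunction of $(Mz)\cdot\partial_z$ with generalized eigenvalue $\lambda_\alpha$, and pulling back shows $\mathcal{B}^*((Gz)^\alpha)$ is a generalized eigenfunction of $P$ with the same eigenvalue.

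Finally, for completeness I would observe that the substitution $v(z) \mapsto v(Gz)$ restricts to an invertible linear map of the finite-dimensional space $\mathcal{E}_m$ onto itself, with inverse given by substitution by $G^{-1}$; thus $\{(Gz)^\alpha : |\alpha| = m\}$ is again a basis of $\mathcal{E}_m$. Summing over $m$ gives $\opnm{span}\{(Gz)^\alpha\}_\alpha = \opnm{span}\{z^\alpha\}_\alpha$, which is dense in $\mathcal{F}$, and applying $\mathcal{B}^*$ yields density in $L^2(\Bbb{R}^n)$. The main obstacle is the nilpotency step: one must check that $R$ never connects two distinct eigenvalues, so that each $w^\alpha$ is attached to the single generalized eigenvalue $\lambda_\alpha$ rather than mixing several; with that in hand, the remaining arguments are routine bookkeeping.
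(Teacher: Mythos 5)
Your proposal is correct and takes essentially the same route as the paper's proof: unitary conjugation by the Bargmann transform, the change of variables $w = Gz$ reducing $(Mz)\cdot\partial_z$ to $(GMG^{-1}w)\cdot\partial_w$, the observation that the nilpotent part only couples monomials within a single Jordan block (hence with $\lambda_\beta = \lambda_\alpha$) while strictly shifting the weighted degree $\sum_k k\beta_k$, which is bounded on each $\mathcal{E}_m$, so that $(GMG^{-1}w)\cdot\partial_w - \lambda_\alpha$ is nilpotent on the relevant span, and completeness from $\opnm{span}\{(Gz)^\alpha\} = \opnm{span}\{z^\alpha\}$. The only cosmetic difference is your explicit splitting of the Jordan matrix as $D + R$, which the paper encodes implicitly via the coefficients $\gamma_j \in \{0,1\}$.
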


\begin{proof}
This essentially follows the part of the proof of \cite[Thm.~3.5]{Sj1974}; see also \cite[Lem.~4.1]{HiSjVi2013}. It suffices to show that $\{(Gz)^\alpha\}$ are eigenfunctions of $Mz\cdot \partial_z$; using that $\partial_z (v(Gz)) = G^\top (\partial_z v)(Gz)$, we see that
\[
	(Mz\cdot \partial_z)(Gz)^\alpha = \left. (GMG^{-1}\zeta\cdot \partial_\zeta) \zeta^\alpha \right|_{\zeta = Gz}.
\]
Letting $e_j$ indicate the standard basis vector with $1$ in the $j$-th component and $0$ elsewhere,
\[
	(GMG^{-1}\zeta\cdot \partial_\zeta)\zeta^\alpha = \sum_{j=1}^n \alpha_j(\lambda_j \zeta^\alpha + \gamma_j \zeta^{\alpha-e_j + e_{j-1}})
\]
with $\gamma_j \in \{0,1\}$ equaling 1 only when $e_j$ is part of a Jordan block of $GMG^{-1}$.  In particular, if $\gamma_j \neq 0$ then $\lambda_{\alpha - e_j + e_{j-1}} = \lambda_\alpha$.

Therefore
\[
	(GMG^{-1}\zeta\cdot \partial_\zeta - \lambda_\alpha)\zeta^\alpha = \sum c_\beta \zeta^\beta,
\]
where $c_\beta \neq 0$ is possible only when $|\beta| = |\alpha|$, when $\lambda_\beta = \lambda_\alpha$, and when $\sum_{j=1}^n j\alpha_j < \sum_{j=1}^n j\beta_j$.  This can only be repeated finitely many times before one violates the trivial bound $\sum_{j=1}^n j\beta_j \leq n|\beta|$.  Since also $\sum_{j=1}^n j\alpha_j \geq |\alpha|$, we conclude that
\[
	(GMG^{-1}\zeta\cdot \partial_\zeta - \lambda_\alpha)^{(n-1)|\alpha|+1}\zeta^\alpha = 0, \quad \forall \alpha \in \Bbb{N}^n,
\]
proving that $\zeta^\alpha = (Gz)^\alpha$ is a generalized eigenvector of $Mz\cdot \partial_z$ as claimed. Since $\opnm{span} \{\zeta^\alpha\} = \opnm{span} \{z^\alpha\}$ which is dense in $\mathcal{F}$, these generalized eigenvectors form a complete family, completing the proof of the corollary.
\end{proof}

\begin{corollary}\label{cor.bounded}
The operator $\exp(-tP)$ extends to a bounded operator if and only if $\|e^{-tM}\| \leq 1$ and is compact if and only if $\|e^{-tM}\| < 1$.
\end{corollary}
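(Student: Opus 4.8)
The plan is to reduce everything to the operator $\exp Q$ appearing on the right-hand side of \eqref{eq.SVD}, which by the remark following the theorem acts diagonally on the Hermite basis via $\exp Q\, h_\alpha = d_\alpha h_\alpha$ with $d_\alpha = \prod_{j=1}^n \sigma_j^{\alpha_j} > 0$. Since $\mathcal{U}_1$ and $\mathcal{U}_2$ are unitary and preserve $\opnm{span}\{h_\alpha\}$, conjugation by them preserves boundedness (indeed the operator norm) as well as compactness; hence $\exp(-tP)$ extends to a bounded (resp. is a compact) operator if and only if $\exp Q$ does. I would also record at the outset that $\|e^{-tM}\| = \sigma_n$, the largest singular value.

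For the boundedness claim I would invoke the fact that the operator norm of a diagonal operator equals the supremum of the moduli of its diagonal entries. If $\sigma_n \leq 1$ then every $\sigma_j \leq 1$, so $d_\alpha \leq 1$ for all $\alpha$, with equality at $\alpha = 0$; thus $\exp Q$ is bounded with $\|\exp Q\| = 1$. Conversely, if $\sigma_n > 1$ then the choice $\alpha = k e_n$ gives $d_{ke_n} = \sigma_n^k \to \infty$, so $\exp Q$ is unbounded on its domain. This shows $\exp(-tP)$ is bounded exactly when $\|e^{-tM}\| = \sigma_n \leq 1$.

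For compactness I would use that a bounded diagonal operator is compact if and only if its diagonal entries tend to zero, equivalently if for every $\eps > 0$ only finitely many entries exceed $\eps$. If $\sigma_n < 1$, then since each $\sigma_j \leq \sigma_n$ and $\sigma_j > 0$ we have $d_\alpha \leq \sigma_n^{|\alpha|}$; because each energy level $E_m$ in \eqref{eq.def.Em} is finite-dimensional (so only finitely many $\alpha$ satisfy $|\alpha| = m$) and $\sigma_n^m \to 0$ as $m \to \infty$, only finitely many $d_\alpha$ can exceed any given threshold, and $\exp Q$ is compact. If $\sigma_n = 1$, the entries $d_{ke_n} = \sigma_n^k = 1$ do not tend to zero, so $\exp Q$ is bounded but not compact; and if $\sigma_n > 1$ it is not even bounded. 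Hence $\exp(-tP)$ is compact exactly when $\|e^{-tM}\| < 1$.

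I expect the only delicate point to be the bookkeeping around domains: the phrase ``$\exp(-tP)$ extends to a bounded operator'' must be read as boundedness of the operator on the dense subspace $\opnm{span}\{h_\alpha\}$, on which \eqref{eq.SVD} is a genuine operator identity, so that the unitary reduction to $\exp Q$ is legitimate. Once this interpretation is fixed the remainder is an elementary computation of the spectrum of a diagonal operator, and no further analytic input is required.
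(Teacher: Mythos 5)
Your proof is correct and takes essentially the same route as the paper's: both reduce, via the unitaries $\mathcal{U}_1,\mathcal{U}_2$ from the Theorem, to the self-adjoint operator $\exp Q$ diagonalized by the Hermite basis with eigenvalues $\prod_{j=1}^n \sigma_j^{\alpha_j}$ as in \eqref{eq.Spec.exp.Q}, and then read off boundedness from $\sigma_n = \|e^{-tM}\| \leq 1$ (unboundedness witnessed by $\alpha = k e_n$, exactly the paper's \eqref{eq.norm.maximizers}) and compactness from whether these eigenvalues tend to zero. Your explicit remarks on the domain interpretation and the finite dimensionality of each level $E_m$ only make precise what the paper leaves implicit.
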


\begin{proof}
For $\exp(-tP)$ to be bounded (resp.\ compact), it is necessary and sufficient for the self-adjoint operator $\exp Q$ for $Q$ as in \eqref{eq.def.Q.sigma} to be bounded (resp.\ compact), and the singular values of $\exp(-tP)$ are the eigenvalues of $\exp Q$. From the Hermite function diagonalization,
\begin{equation}\label{eq.Spec.exp.Q}
	\opnm{Spec}(\exp Q) = \left\{\prod_{j=1}^n \sigma_j^{\alpha_j} \::\: \alpha \in \Bbb{N}^n\right\}.
\end{equation}
Since the $\sigma_j$ are arranged in increasing order, it is necessary and sufficient for boundedness to have $\sigma_n = \|e^{-t M}\| \leq 1$, because otherwise 
\begin{equation}\label{eq.norm.maximizers}
	\|\exp(-tP)\mathcal{U}_2^*h_{(0,\dots,0,k)}\| = \sigma_n^k
\end{equation}
tends to infinity exponentially rapidly as $k\to\infty$. Similarly, if $\sigma_n = \|e^{-tM}\| = 1$, then \eqref{eq.norm.maximizers} shows that the eigenvalue 1 for $\exp Q$ has infinite multiplicity, precluding compactness of $\exp(-tP)$. If $\sigma_n = \|e^{-tM}\| < 1$, then $\exp Q$ is a positive self-adjoint operator whose eigenvalues tend to zero, and therefore $\exp Q$ and $\exp(-tP)$ are compact.
\end{proof}

\begin{corollary}
	The operator $\exp(-tP)$ is bounded for all $t \geq 0$ simultaneously if and only if
	\[
		\Re \langle Mz, z\rangle \geq 0, \quad \forall |z| = 1,
	\]
	and for $\exp(-tP)$ to be compact for all $t > 0$, it is sufficient that
	\[
		\Re \langle Mz, z\rangle > 0, \quad \forall |z| = 1.
	\]
\end{corollary}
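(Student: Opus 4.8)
The plan is to reduce both assertions to the behavior of the matrix norm $\|e^{-tM}\|$ by invoking Corollary \ref{cor.bounded}, according to which $\exp(-tP)$ is bounded exactly when $\|e^{-tM}\| \le 1$ and compact exactly when $\|e^{-tM}\| < 1$. Thus the first claim becomes the statement that $\|e^{-tM}\| \le 1$ holds for every $t \ge 0$ if and only if $\Re\langle Mz, z\rangle \ge 0$ for all $|z| = 1$, while the second becomes the implication that strict positivity of $\Re\langle Mz, z\rangle$ on the unit sphere forces $\|e^{-tM}\| < 1$ for every $t > 0$.

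The single tool I would use is the elementary differential identity for the trajectory $u(t) = e^{-tM}x$. Since $u'(t) = -Mu(t)$, the product rule for the sesquilinear form gives
\[
	\frac{d}{dt}\|u(t)\|^2 = \langle u'(t), u(t)\rangle + \langle u(t), u'(t)\rangle = -2\Re\langle Mu(t), u(t)\rangle .
\]
This converts the pointwise-in-$z$ hypotheses on $\Re\langle Mz, z\rangle$ directly into monotonicity or decay statements for $\|e^{-tM}x\|^2$. For the boundedness equivalence, the direction ($\Leftarrow$) is immediate: if $\Re\langle Mz, z\rangle \ge 0$ for all $z$, the identity shows that $t \mapsto \|e^{-tM}x\|^2$ is nonincreasing, so $\|e^{-tM}x\| \le \|x\|$ for every $t \ge 0$ and hence $\|e^{-tM}\| \le 1$. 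For ($\Rightarrow$) I would argue by differentiating at the origin: if $\|e^{-tM}\| \le 1$ for all $t \ge 0$, then for each fixed $x$ the function $\|e^{-tM}x\|^2$ attains a one-sided maximum at $t = 0$, so its right derivative there, namely $-2\Re\langle Mx, x\rangle$, is $\le 0$; rescaling to $|z| = 1$ yields $\Re\langle Mz, z\rangle \ge 0$.

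For the compactness sufficiency, I would upgrade the monotonicity to exponential decay. If $\Re\langle Mz, z\rangle > 0$ on the compact unit sphere, continuity furnishes $c > 0$ with $\Re\langle Mz, z\rangle \ge c|z|^2$ for all $z$; feeding this into the differential identity gives $\frac{d}{dt}\|u(t)\|^2 \le -2c\|u(t)\|^2$, and a Gronwall (or direct integration) argument yields $\|e^{-tM}x\|^2 \le e^{-2ct}\|x\|^2$. Hence $\|e^{-tM}\| \le e^{-ct} < 1$ for every $t > 0$, and Corollary \ref{cor.bounded} concludes that $\exp(-tP)$ is compact.

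I expect no serious obstacle: everything rests on the one derivative identity together with Corollary \ref{cor.bounded}, with the only mildly delicate point being the ($\Rightarrow$) direction, where the global contraction property must be differentiated at $t = 0$ to recover the infinitesimal semidefiniteness. It is also worth remarking why the compactness assertion is stated only as a sufficient condition: the Fokker--Planck matrix $M_a$ of Example \ref{ex.FP} has Hermitian part $\frac{1}{2}(M_a + M_a^*) = \operatorname{diag}(0,1)$, which is positive semidefinite but not positive definite, yet the associated dynamics is hypocoercive and contracts strictly for $t > 0$, so strict positivity of $\Re\langle Mz, z\rangle$ cannot be necessary.
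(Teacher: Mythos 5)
Your proposal is correct and follows essentially the same route as the paper: both reduce the statement to the behavior of $\|e^{-tM}\|$ via Corollary \ref{cor.bounded} and then exploit the derivative identity $\frac{d}{dt}|e^{-tM}z|^2 = -2\Re\langle M e^{-tM}z, e^{-tM}z\rangle$, with your ($\Rightarrow$) direction by differentiating the contraction property at $t=0$ matching the paper's reasoning. Your treatment is in fact slightly more explicit than the paper's --- carrying the identity along the whole trajectory and using compactness of the sphere plus a Gronwall bound $\|e^{-tM}\|\leq e^{-ct}$ for the strict case, where the paper only asserts ``strictly decreasing'' --- and your closing remark on the Fokker--Planck matrix $M_a$ correctly explains why strict positivity is sufficient but not necessary, as the paper also notes.
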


\begin{remark*}
	The symbol of $P$ as a differential operator is
	\[
		\frac{1}{2}M(x-i\xi)\cdot (x+i\xi), \quad (x,\xi) \in \Bbb{R}^{2n}.
	\]
	Setting $z = x-i\xi$, we see that the conditions in the corollary are equivalent to the classical ellipticity conditions that the real part of the symbol should be positive (semi-)definite.

	The second condition is not necessary to have $\exp(-tP)$ compact for all $t > 0$, as is clear from Examples \ref{ex.FP} and \ref{ex.FP.cont}.
\end{remark*}

\begin{proof}
	We compute that
	\[
		|e^{-tM}z|^2 = |z|^2 - 2t\Re\langle Mz, z\rangle + \BigO(t^2).
	\]
	Therefore
	\[
		\frac{d}{dt}|e^{-tM}z|^2 = -2\Re \langle Mz, z\rangle,
	\]
	and note that $\|e^{-tM}\| \leq 1$ for all $t \geq 0$ if and only if $|e^{-tM}z|^2$ is decreasing on $\{|z| = 1\}$ while $\|e^{-tM}\| < 1$ for all $t > 0$ if $|e^{-tM}z|^2$ is strictly decreasing on $\{|z| = 1\}$.  This, along with Corollary \ref{cor.bounded}, proves the corollary.
\end{proof}

We next show precise asymptotics for the exponential decay, like $\exp(-Cj)^{1/n}$, of the singular values $s_j$ of $\exp(-t P)$ whenever this operator is compact.  In particular, this implies that $\exp(-tP)$ is in every Schatten class $\mathfrak{S}_p$, $p \in (0, \infty)$, as soon as it is compact.

\begin{corollary}
Let $P$ and $\exp(-tP)$ be as in the Theorem, and let $\Sigma$ be the diagonal matrix with entries equal to the singular values of $e^{-tM}$. If $\exp(-tP)$ is compact, then its singular values, arranged in non-increasing order, obey
\[
	-\log s_j = \left(n! \det(-\log \Sigma)\right)^{1/n} j^{1/n} (1+\BigO(j^{-1/n})).
\]
\end{corollary}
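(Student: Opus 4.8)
My plan is to reduce the statement to an elementary lattice-point count. By the Theorem the unitaries $\mathcal{U}_1,\mathcal{U}_2$ do not change singular values, so from \eqref{eq.SVD} the singular values of $\exp(-tP)=\mathcal{U}_1^*(\exp Q)\mathcal{U}_2$ coincide with those of $\exp Q$; since $\exp Q$ is positive and self-adjoint these are exactly its eigenvalues, which by \eqref{eq.Spec.exp.Q} form the multiset $\{\prod_{j=1}^n\sigma_j^{\alpha_j}:\alpha\in\Bbb{N}^n\}$. Compactness forces $\sigma_n<1$ by Corollary \ref{cor.bounded}, hence every $\sigma_j<1$, and I would set $\mu_j=-\log\sigma_j>0$, so that $\det(-\log\Sigma)=\prod_{j=1}^n\mu_j$ and, for the singular value $s$ indexed by $\alpha$, one has $-\log s=\sum_{j=1}^n\mu_j\alpha_j$. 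Writing $\ell_1\le\ell_2\le\cdots$ for the non-decreasing rearrangement of the multiset $\{\sum_j\mu_j\alpha_j\}$, the claim becomes $\ell_j=(n!\prod_j\mu_j)^{1/n}\,j^{1/n}(1+\BigO(j^{-1/n}))$, and the whole problem reduces to controlling the counting function $N(\lambda)=\#\{\alpha\in\Bbb{N}^n:\sum_{j=1}^n\mu_j\alpha_j\le\lambda\}$.

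The next step is to pin down $N(\lambda)$ with an error good enough to survive inversion, which I would do by comparing with the simplex $T(\lambda)=\{x\in\Bbb{R}^n:x_j\ge0,\ \sum_j\mu_jx_j\le\lambda\}$, of volume $\lambda^n/(n!\prod_j\mu_j)$, using the standard device of attaching to each lattice point $\alpha$ the unit cube $\alpha+[0,1)^n$. On one hand, for $x\in T(\lambda)$ the componentwise floor $\lfloor x\rfloor$ is a counted lattice point and $x\in\lfloor x\rfloor+[0,1)^n$, so $T(\lambda)$ is covered by the cubes attached to counted points, giving $\opnm{Vol}T(\lambda)\le N(\lambda)$. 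On the other hand, with $S=\sum_j\mu_j$, each cube $\alpha+[0,1)^n$ of a counted point lies in $T(\lambda+S)$, and these cubes are disjoint, giving $N(\lambda)\le\opnm{Vol}T(\lambda+S)$. This sandwiches
\[
	\frac{\lambda^n}{n!\prod_j\mu_j}\le N(\lambda)\le\frac{(\lambda+S)^n}{n!\prod_j\mu_j}.
\]

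Finally I would invert. Setting $V=1/(n!\prod_j\mu_j)$ and using $\ell_j\le\lambda\iff N(\lambda)\ge j$, the lower bound $N(\lambda)\ge V\lambda^n$ evaluated at $\lambda=(j/V)^{1/n}$ gives $\ell_j\le(j/V)^{1/n}$, while from $N(\ell_j)\ge j$ together with the upper bound $N(\ell_j)\le V(\ell_j+S)^n$ one obtains $\ell_j\ge(j/V)^{1/n}-S$. Since $(1/V)^{1/n}=(n!\prod_j\mu_j)^{1/n}=(n!\det(-\log\Sigma))^{1/n}$, these two bounds read $\ell_j=(n!\det(-\log\Sigma))^{1/n}j^{1/n}+\BigO(1)$, and absorbing the additive $\BigO(1)$ into the leading term as a relative $\BigO(j^{-1/n})$ error yields exactly the claimed asymptotics.

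The only place demanding genuine care is the error term: a bare volume asymptotic $N(\lambda)\sim V\lambda^n$ would recover the leading order but not the $\BigO(j^{-1/n})$ correction, so the purpose of the cube-packing sandwich is precisely to keep the boundary error explicit as the constant shift $S$ and to carry it cleanly through the inversion. No deeper input, such as a full Weyl law or Ehrhart expansion, is needed, since this shift already controls the relative error at the required precision.
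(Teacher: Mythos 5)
Your proof is correct and takes essentially the same route as the paper's: both reduce via the Theorem and \eqref{eq.Spec.exp.Q} to the counting function $N(\lambda)=\#\{\alpha\in\Bbb{N}^n : \sum_{k}(-\log\sigma_k)\alpha_k\le\lambda\}$, compare it with the volume of a simplex by the box-anchoring (cube-packing) device, and invert the resulting two-sided estimate to get the relative $\BigO(j^{-1/n})$ error. The only difference is presentational: where the paper cites the elementary lattice-count comparison (cf.~\cite[Eq.~(68)]{Vi2012b}), you write out the sandwich $V\lambda^n\le N(\lambda)\le V(\lambda+S)^n$ and the inversion explicitly, which is a valid and complete rendering of the same argument.
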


\begin{proof}
From \eqref{eq.Spec.exp.Q}, we have the singular values of $\exp(-tP)$ which tend to zero by the assumption that $\exp(-tP)$ is compact. Note that this means that $\sigma_j \in (0,1)$ for all $j$. Taking logarithms, we see that the singular value $s_j$ of $\exp(-tP)$ is determined by the requirements that
\[
	 \# \{\alpha \in \Bbb{N}^n \::\: \sum_{k=1}^n (-\log \sigma_k)\alpha_k < -\log s_j\} < j
\]
and that
\[
	\# \{\alpha \in \Bbb{N}^n \::\: \sum_{k=1}^n (-\log \sigma_k)\alpha_k = -\log s_j\} = j.
\]
An elementary argument (cf.~\cite[Eq.~(68)]{Vi2012b}) comparing the cardinality of this set with the volume of the boxes anchored at lattice points --- which is in turn well-approximated by the volume of a simplex --- shows that as $R \to \infty$ we have the estimate
\[
	\# \{\alpha \in \Bbb{N}^n \::\: -\sum_{k=1}^n (-\log \sigma_k)\alpha_k \leq R\} = \frac{R^n}{n! \prod_{k=1}^n (-\log \sigma_k)}(1+\BigO(R^{-1})).
\]
Letting $R = -\log s_j$ and taking $n$-th roots, we obtain
\[
	(-\log s_j)(1+\BigO(-\log s_j)) = (n!\det(-\log \Sigma))^{1/n}j^{1/n}.
\]
Since we now have that $-\log s_j$ grows like $j^{1/n}$, we have that $(1+\BigO(-\log s_j))^{-1} = 1+\BigO(j^{-1/n})$ as $j \to \infty$, and the result follows.
\end{proof}

We can study the regularization properties of the operator $\exp(-tP)$ by comparing it with the standard harmonic oscillator semigroup through composition.

\begin{corollary}
Let $Q_0$ be the standard harmonic oscillator \eqref{eq.def.Q0}. Then, for $\delta \in \Bbb{R}$,
\begin{equation}\label{eq.smoothing.l}
	\exp(\delta Q_0) \exp(-tP)
\end{equation}
and
\begin{equation}\label{eq.smoothing.r}
	\exp(-tP)\exp(\delta Q_0)
\end{equation}
are bounded on $L^2(\Bbb{R}^n)$ if and only if
\[
	\|e^{-tM}\| \leq e^{-\delta}.
\]
\end{corollary}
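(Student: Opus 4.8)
The plan is to reduce everything to the singular value decomposition \eqref{eq.SVD} established in the Theorem, exploiting the fact that $\exp(\delta Q_0)$ acts as a scalar on each energy subspace $E_m$. First I would record that, since $Q_0 h_\alpha = |\alpha| h_\alpha$, the restriction of $\exp(\delta Q_0)$ to $E_m$ is exactly the scalar $e^{\delta m}$; because the unitaries $\mathcal{U}_1, \mathcal{U}_2$ preserve each $E_m$, it follows immediately that $\exp(\delta Q_0)$ commutes with $\mathcal{U}_1$, $\mathcal{U}_2$, and their adjoints on the dense domain $\opnm{span}\{h_\alpha\}$, on which all compositions below are well defined (each factor maps this span into itself up to scalars and the $E_m$-preserving unitaries).

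Next, writing $\exp(-tP) = \mathcal{U}_1^* (\exp Q)\mathcal{U}_2$ from \eqref{eq.SVD}, I would push $\exp(\delta Q_0)$ through the unitaries to obtain
\[
	\exp(\delta Q_0)\exp(-tP) = \mathcal{U}_1^*\, \exp(\delta Q_0)(\exp Q)\, \mathcal{U}_2 .
\]
Since $\mathcal{U}_1^*$ and $\mathcal{U}_2$ are unitary, the boundedness of the left-hand side is equivalent to the boundedness of the diagonal operator $\exp(\delta Q_0)\exp Q$. For the right-hand composition \eqref{eq.smoothing.r} the same commutation gives $\exp(-tP)\exp(\delta Q_0) = \mathcal{U}_1^* (\exp Q)\exp(\delta Q_0)\mathcal{U}_2$, and because $\exp Q$ and $\exp(\delta Q_0)$ are both diagonal in the Hermite basis they commute; hence both compositions reduce to the identical diagonal operator.

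It then remains to analyze this diagonal operator. Using the Hermite diagonalization from the Remark, $\exp(\delta Q_0)(\exp Q) h_\alpha = \bigl(\prod_{j=1}^n (e^\delta \sigma_j)^{\alpha_j}\bigr) h_\alpha$, so boundedness is equivalent to $\sup_{\alpha \in \Bbb{N}^n} \prod_{j=1}^n (e^\delta \sigma_j)^{\alpha_j} < \infty$. Since the $\sigma_j$ are arranged in increasing order, this supremum is finite exactly when $e^\delta \sigma_n \leq 1$: if instead $e^\delta \sigma_n > 1$, the test functions $h_{(0,\dots,0,k)}$ yield norms $(e^\delta\sigma_n)^k \to \infty$, mirroring \eqref{eq.norm.maximizers}. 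Recalling $\sigma_n = \|e^{-tM}\|$, the condition becomes $\|e^{-tM}\| \leq e^{-\delta}$, as claimed.

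The computation here is entirely routine once the commutation is in hand; the only point requiring care --- and the main conceptual step --- is the observation that $\exp(\delta Q_0)$ commutes with the intertwining unitaries, which is precisely what allows the problem to collapse onto a single diagonal operator whose norm is governed by the largest singular value $\sigma_n$.
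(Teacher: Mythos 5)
Your proof is correct, but it takes a different route from the paper's. The paper's own argument never leaves the Fock space: from the proof of the Theorem, $\mathcal{B}\exp(\delta Q_0)\mathcal{B}^*$ is the composition operator $v(z) \mapsto v(e^\delta z)$ (the case $M = I$, $t = -\delta$) and $\mathcal{B}\exp(-tP)\mathcal{B}^*$ is $v(z)\mapsto v(e^{-tM}z)$, so --- because the scalar matrix $e^\delta I$ commutes with $e^{-tM}$ --- both products \eqref{eq.smoothing.l} and \eqref{eq.smoothing.r} become the single composition operator $v(z) \mapsto v(e^{\delta - tM}z)$, to which the boundedness criterion of Corollary \ref{cor.bounded} applies with $e^{-tM}$ replaced by $e^{\delta-tM}$, whose norm is $e^\delta\|e^{-tM}\|$. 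You instead stay on the $L^2$ side and exploit the same scalar-commutation phenomenon in spectral form: $\exp(\delta Q_0)$ is the scalar $e^{\delta m}$ on each $E_m$, the SVD unitaries $\mathcal{U}_1, \mathcal{U}_2$ preserve each $E_m$, and hence both products collapse to the diagonal operator $\mathcal{U}_1^*\exp(\delta Q_0)\exp(Q)\,\mathcal{U}_2$ with eigenvalues $\prod_j (e^\delta \sigma_j)^{\alpha_j}$. The two arguments are logically equivalent (both reduce to $e^\delta\sigma_n \leq 1$), but each buys something: the paper's version is a one-line reuse of its already-established Fock-side criterion and makes the equality of the two compositions transparent as operators; your version avoids any return to the Bargmann picture, makes the domain bookkeeping on $\opnm{span}\{h_\alpha\}$ explicit (a point the paper passes over silently), and yields the operator norm $\max(1, e^\delta\sigma_n)$ as a byproduct. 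Your commutation claim is sound --- since each $\mathcal{U}_j$ and $\mathcal{U}_j^*$ preserves the finite-dimensional eigenspaces $E_m$ of $Q_0$, they commute with any function of $Q_0$ on the Hermite span --- so no gap results; this is a legitimate alternative proof.
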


\begin{proof}
From the proof of the Theorem, for any $v \in \mathcal{F}$,
\[
	\mathcal{B}\exp(\delta Q_0) \exp(-tP)\mathcal{B}^*v(z) = \mathcal{B}\exp(-tP)\exp(\delta Q_0)\mathcal{B}^*v(z) = v(e^{\delta-tM}z).
\]
But again this map is bounded if and only if $\|e^{\delta - tM}\|\leq 1$, which gives the corollary.
\end{proof}

\begin{corollary}
Let $\exp(-tP)$ be as in the Theorem and suppose that $\|e^{-tM}\| \leq 1$. For the Hermite functions $\{h_\alpha\}$ from \ref{eq.def.Hermite}, let
\[
	\Pi_N u = \sum_{|\alpha| \leq N} \langle u, h_\alpha\rangle h_\alpha
\]
be the orthogonal projection onto $\bigoplus_{k=0}^N E_m$ for $E_m$ in \eqref{eq.def.Em}. Then, for all $N \in \Bbb{N}$,
\[
	\|\exp(-tP)(1-\Pi_N)\|_{\mathcal{L}(L^2(\Bbb{R}^n))} = \|e^{-tM}\|^{N+1}.
\]
\end{corollary}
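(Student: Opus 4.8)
The plan is to push the computation through the singular-value decomposition of the Theorem, turning the left-hand side into the norm of an explicit diagonal operator, and then to evaluate that norm by an elementary optimization over multi-indices. Write $Q = Q(\sigma_1,\dots,\sigma_n)$ as in \eqref{eq.def.Q.sigma}, so that the Theorem reads $\exp(-tP) = \mathcal{U}_1^* (\exp Q)\mathcal{U}_2$. The decisive structural point is that $\mathcal{U}_1$ and $\mathcal{U}_2$ preserve each energy space $E_m$; since $\Pi_N$ is the orthogonal projection onto the $\mathcal{U}_j$-invariant subspace $\bigoplus_{m=0}^N E_m$, it follows that $\mathcal{U}_2$ commutes with $\Pi_N$, hence with $1-\Pi_N$.

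First I would use this commutation, together with the unitarity of $\mathcal{U}_1$ and $\mathcal{U}_2$, to write
\[
	\exp(-tP)(1-\Pi_N) = \mathcal{U}_1^*\,(\exp Q)(1-\Pi_N)\,\mathcal{U}_2,
\]
whence
\[
	\|\exp(-tP)(1-\Pi_N)\|_{\mathcal{L}(L^2(\Bbb{R}^n))} = \|(\exp Q)(1-\Pi_N)\|_{\mathcal{L}(L^2(\Bbb{R}^n))}.
\]
Next I would exploit that $\exp Q$ is diagonal in the Hermite basis, with $(\exp Q)h_\alpha = \left(\prod_{j=1}^n \sigma_j^{\alpha_j}\right)h_\alpha$, as recorded in the first Remark and in \eqref{eq.Spec.exp.Q}. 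Because $1-\Pi_N$ is the orthogonal projection onto $\opnm{span}\{h_\alpha : |\alpha| > N\}$, the operator $(\exp Q)(1-\Pi_N)$ is again diagonal, annihilating $h_\alpha$ for $|\alpha| \leq N$ and acting as multiplication by $\prod_{j=1}^n \sigma_j^{\alpha_j}$ on $h_\alpha$ for $|\alpha| > N$. Its operator norm is therefore the supremum of these diagonal entries, namely $\sup_{|\alpha| \geq N+1} \prod_{j=1}^n \sigma_j^{\alpha_j}$.

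Finally I would evaluate this supremum. The hypothesis $\|e^{-tM}\| \leq 1$ gives $\sigma_n \leq 1$, so the ordering $0 < \sigma_1 \leq \dots \leq \sigma_n \leq 1$ yields $\sigma_j^{\alpha_j} \leq \sigma_n^{\alpha_j}$ for each $j$, the map $x \mapsto x^{\alpha_j}$ being nondecreasing on $(0,1]$ when $\alpha_j \geq 0$. Hence for any $\alpha$ with $|\alpha| = m \geq N+1$,
\[
	\prod_{j=1}^n \sigma_j^{\alpha_j} \leq \sigma_n^{|\alpha|} = \sigma_n^{m} \leq \sigma_n^{N+1},
\]
using $\sigma_n \leq 1$ in the last step. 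This bound is attained at $\alpha = (0,\dots,0,N+1)$, which gives product exactly $\sigma_n^{N+1}$, so the supremum equals $\sigma_n^{N+1} = \|e^{-tM}\|^{N+1}$; combined with the reduction above this proves the identity.

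I expect no serious obstacle: the entire content lies in observing that $\mathcal{U}_1$ and $\mathcal{U}_2$ commute with $\Pi_N$ because they respect the energy-space grading, which reduces the problem to the diagonal operator $\exp Q$, after which the optimization is elementary and the extremal multi-index is forced by the ordering of the singular values. The only point needing minor care is that $\exp Q$ is genuinely bounded in this regime — guaranteed by $\sigma_n \leq 1$, consistent with Corollary \ref{cor.bounded} — so that its operator norm is indeed the supremum of the diagonal entries rather than being infinite.
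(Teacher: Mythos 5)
Your proof is correct and follows essentially the same route as the paper: both reduce via the Theorem's singular-value decomposition to the diagonal operator $\exp Q$ on the Hermite basis and then identify the supremum $\sigma_n^{N+1}$ of the entries $\prod_j \sigma_j^{\alpha_j}$ over $|\alpha| \geq N+1$. The only difference is that you spell out explicitly the step the paper leaves implicit --- that $\mathcal{U}_1$, $\mathcal{U}_2$ commute with $\Pi_N$ because they preserve each $E_m$ --- which is a welcome clarification rather than a deviation.
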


\begin{remark*}
When $N = 0$, the decay of $\exp(-tP)(1-\Pi_0)$ is known as return (or convergence) to equilibrium; see e.g.\ \cite[Ch.~6]{HeNiBook}.
\end{remark*}

\begin{proof}
The norm of an operator is the largest of its singular values, and from the classical formula for the eigenvalues of an operator of harmonic oscillator type and the Theorem, the singular values of $\exp(-tP)|_{E_m}$ are
\[
	\left\{\prod_{j=1}^n \sigma_j^{\alpha_j} \::\: |\alpha| = m\right\}.
\]
We recall that every $\sigma_j \in (0,1]$ since $\|e^{-tM}\| \leq 1$, and therefore the largest such singular value appears when $m = N+1$ with singular value $\sigma_n^{N+1} = \|e^{-tM}\|^{N+1}$. This identifies the norm of the operator composed with the projection as desired.
\end{proof}

\begin{corollary}
If $\opnm{Spec} M \subset \{\Re \lambda > 0\}$ and $\exp(-tP)$ is as in the Theorem then there exists $T_0 > 0$ such that $\exp(-tP)$ is compact on $L^2(\Bbb{R}^n)$ for all $t > T_0$.  Moreover, for any $\delta > 0$, however large, there exists $T_\delta > 0$ such that the operators \eqref{eq.smoothing.l} and \eqref{eq.smoothing.r} are compact on $L^2(\Bbb{R}^n)$ for all $t > T_\delta$.
\end{corollary}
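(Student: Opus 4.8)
The plan is to reduce both assertions to the single scalar quantity $\|e^{-tM}\|$ together with the equivalence ``compact $\iff$ operator norm $<1$'' already isolated for operators of this form, and then to exploit that the spectral hypothesis forces this norm to decay to zero. The key preliminary step is the decay estimate: I would set $c = \min_j \Re\lambda_j$, which is strictly positive by assumption, and write $M = G^{-1}JG$ with $J$ in Jordan normal form. Then $e^{-tM} = G^{-1}e^{-tJ}G$, and on each Jordan block $\lambda_k I + N$ one has $e^{-t(\lambda_k I + N)} = e^{-t\lambda_k}e^{-tN}$ with $e^{-tN}$ polynomial in $t$; hence $\|e^{-tM}\| \leq \|G^{-1}\|\,\|G\|\,e^{-ct}p(t)$ for a fixed polynomial $p$. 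Since $c>0$, the right-hand side tends to $0$, so for every $\eta > 0$ there is a time past which $\|e^{-tM}\| < \eta$.

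For the first assertion I would simply take $\eta = 1$, producing $T_0$ with $\|e^{-tM}\| < 1$ for all $t > T_0$; by Corollary \ref{cor.bounded}, $\exp(-tP)$ is then compact for every such $t$.

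For the smoothing operators I would recall from the proof of the corollary on \eqref{eq.smoothing.l} and \eqref{eq.smoothing.r} that both compositions are carried by the Bargmann transform to the single operator $v \mapsto v(e^{\delta - tM}z)$, where $e^{\delta - tM} = e^{\delta}e^{-tM}$ is again invertible. The reasoning of the Theorem and of Corollary \ref{cor.bounded} never used the particular form of the matrix $e^{-tM}$ beyond its invertibility: the singular value decomposition of any invertible $F$ conjugates $v \mapsto v(Fz)$ to $\exp Q(s_1,\dots,s_n)$ with $s_j$ the singular values of $F$, which is compact exactly when $\max_j s_j = \|F\| < 1$. Applying this with $F = e^{\delta - tM}$, whose operator norm equals $e^{\delta}\|e^{-tM}\|$, the operators \eqref{eq.smoothing.l} and \eqref{eq.smoothing.r} are compact if and only if $\|e^{-tM}\| < e^{-\delta}$. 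Taking $\eta = e^{-\delta}$ in the decay step then yields $T_\delta$ with this strict inequality for all $t > T_\delta$, completing the proof.

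The one point that genuinely requires care — the main obstacle — is the passage from the spectral radius to the operator norm in the decay estimate: when $M$ is not diagonalizable the polynomial factor $p(t)$ is actually present, and one must verify that it does not spoil the exponential decay. It does not, precisely because $c>0$ is fixed and $e^{-ct}p(t)\to 0$ regardless of the degree of $p$. Everything else is bookkeeping, namely matching the two thresholds $\eta = 1$ and $\eta = e^{-\delta}$ to the two compactness criteria.
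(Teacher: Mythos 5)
Your proposal is correct and follows essentially the same route as the paper: the paper likewise deduces both claims from $\|e^{-tM}\|\to 0$, obtained by exponentiating the Jordan normal form of $M$ (the paper additionally records the two-sided asymptotic $\frac{1}{C}t^{r-1}e^{-t\alpha} \leq \|e^{-tM}\| \leq Ct^{r-1}e^{-t\alpha}$ and the quantitative bound $T_\delta \leq C_0(1+\delta)$, neither of which is needed for the bare statement). Your only addition is to spell out the compactness criterion $\|e^{\delta-tM}\| < 1$ for the smoothed operators by rerunning the SVD argument with $F = e^{\delta - tM}$, a detail the paper leaves implicit, and that step is sound since the argument uses only invertibility of $F$.
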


\begin{proof}
Both statements follow immediately from the fact that $\|e^{-tM}\| \to 0$ which follows from exponentiating the Jordan normal form of $M$. In fact, if 
\[
	\alpha = \min_{\lambda \in \opnm{Spec} M}\Re \lambda
\]
and $r$ is the size of the largest Jordan block corresponding to a $\lambda \in \opnm{Spec} M$ for which $\Re \lambda = \alpha$, then there exists $C > 0$ where, for $t$ sufficiently large,
\begin{equation}\label{eq.exptM.long.time}
	\frac{1}{C}t^{r-1}e^{-t\alpha} \leq \|e^{-tM}\| \leq Ct^{r-1}e^{-t\alpha}.
\end{equation}
We may therefore certainly take $T_\delta \leq C_0(1+\delta)$ for some $C_0 > 0$ and all $\delta > 0$.
\end{proof}

\section{Further directions and example details}

For reasons of length and in order to rest entirely on the classical Bargmann transform, the current work avoids an in-depth treatment of many natural questions. In \cite{AlVi2014b}, we continue in these directions, considering a more stable class of quadratic operators by using a family of FBI-Bargmann transforms. We also consider many other questions and extend the analysis here, including the question of whether the eigenfunctions form a core for the solution operators, relationships between regularizing properties of different harmonic oscillator semigroups, and the way ellipticity and weak ellipticity (from a bracket condition) reappear in an elementary way in the Taylor expansion of, for instance, $\|e^{-tM}\|$. Throughout, we see a theme where the range of the symbol determines behavior for short times and the eigenvalues determine behavior in long times, even when the short-time behavior is wildly unbounded.

\begin{example}\label{ex.FP.cont}
We illustrate these results on the Fokker-Planck model $P_a$ from Example \ref{ex.FP}.

Return to equilibrium results like those in \cite[Thm.~3]{GaMi2013} follow from analysis of the norm $\|e^{-tM_a}\|$. The phenomenon of weak ellipticity is reflected geometrically in the fact that the norm is decreasing along integral curves of $\frac{d}{dt}z(t) = Mz(t)$, but the integral curves are tangent to the unit circle at $\{z_2 = 0\}$. More precisely, $|e^{-tM_a}(z_1, z_2)|$ is strictly decreasing for all $(z_1, z_2)\in \Bbb{C}^2\backslash \{0\}$, but only slowly for small $t$: that is, while
\[
	z_2 \neq 0 \implies \frac{d}{dt}|e^{-tM_a}(z_1, z_2)| < 0,
\]
decay along the $z_1$-axis is slower:
\[
	|e^{-tM_a}(1,0)| = 1-\frac{a^2}{3}t^3 + \BigO(t^4).
\]
Some further calculations reveal that
\[
	\|e^{-tM_a}\| = 1-\frac{a^2}{12}t^3 + \BigO(t^4),
\]
with $z = (1, at/2)$ realizing the maximum up to an error of $\BigO(t^4)$.

That regularization and return to equilibrium of $\exp(-tP_a)$ are governed for large times by $\opnm{Spec} M_a$ is elementary because $\opnm{Spec} M_a$ (almost) determines $\|e^{-tM_a}\|$ as $t\to \infty$, as seen in \eqref{eq.exptM.long.time}.

In addition, a weak definition of the solution operator remains even if we add a perturbation which destroys the ellipticity: let
\[
	\tilde{P}_{a,b} = P_a - b(x_1^2 - \partial_{x_1}^2), \quad a\in \Bbb{R}, b > 0.
\]
Having now 
\[
	\tilde{M}_{a,b} = \left(\begin{array}{cc} -b & -a \\ a & 1\end{array}\right),
\]
it is clear that
\[
	|e^{-t\tilde{M}_{a,b}}(1,0)| = 1 + bt + \BigO(t^2)
\]
and therefore $\exp(-t\tilde{P}_{a,b})$ is unbounded for small times. Additionally, a simple scaling argument and a classical pseudomode construction taken from \cite{DeSjZw2004} shows that $\opnm{Spec} \tilde{P}_{a,b} = \Bbb{C}$; see \cite[Sec.~3]{AlVi2014b}.

On the other hand, so long as $b > 0$ is sufficiently small that $\opnm{Spec}\tilde{M}_{a,b} \subset \{\Re \lambda > 0\}$, which means that $b < \min\{a^2, 1\}$, we have $\|e^{-tM_{a,b}}\| \to 0$ exponentially rapidly as $t \to \infty$.  Therefore, for sufficiently large times, the weakly defined solution operator exhibits all the same properties of compactness, exponentially decaying singular values, regularization, and exponentially fast return to equilibrium which are enjoyed by $P_a$.

\begin{figure}
\centering
	\includegraphics[width=0.8\textwidth]{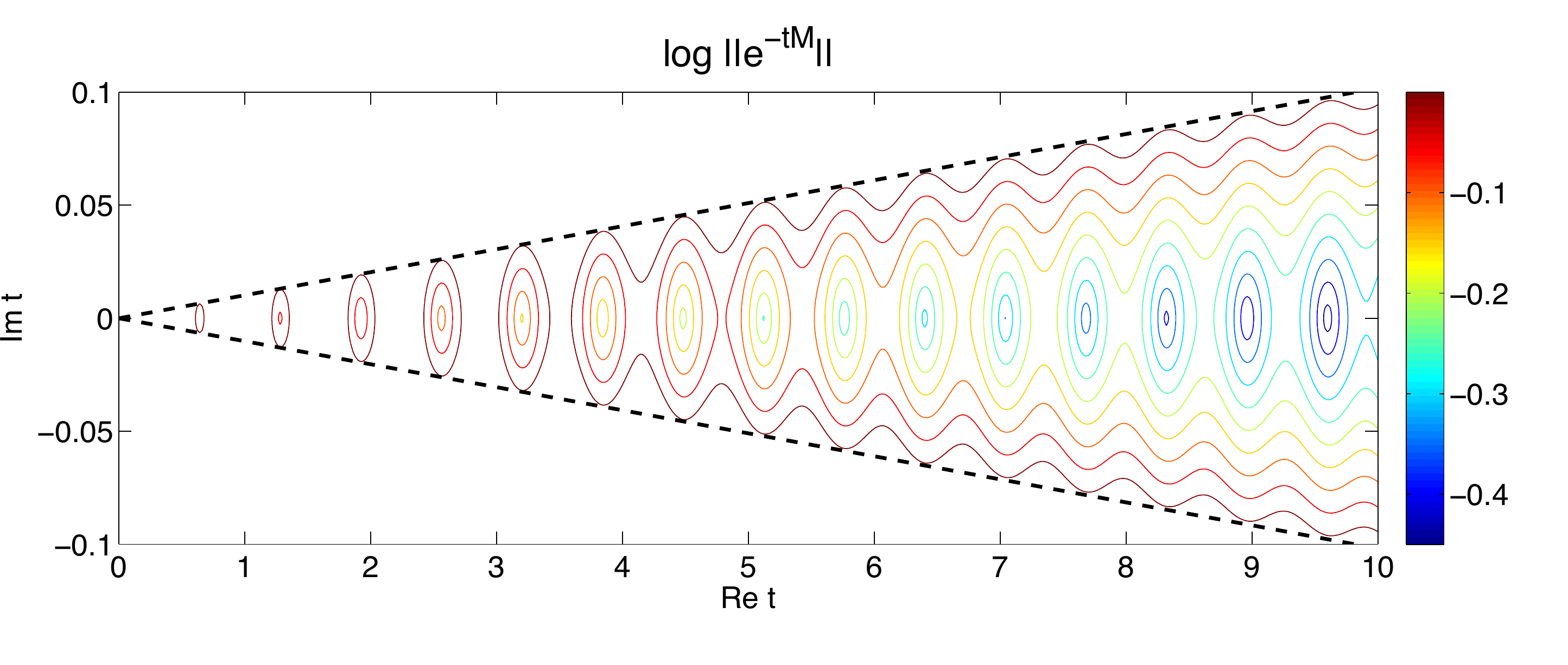}
	\caption{Region where $\exp(-t\tilde{P}_{a,b})$ is bounded and/or compact for $a = 5, b = 0.9$.}
	\label{fig.only}
\end{figure}
\end{example}

As a final illustration, we let $a = 5$ and $b = 0.9$ and present in Figure \ref{fig.only} the contours of $\log \|e^{-t\tilde{M}_{a,b}}\|$ when $\|e^{-t\tilde{M}_{a,b}}\| \leq 1$. Therefore $\exp(-t\tilde{P}_{a,b})$ is bounded inside the outermost curve and return to equilibrium is stronger as $\log \|e^{-t\tilde{M}_{a,b}}\|$ becomes more negative. Previously, it was not clear that $\exp(-t\tilde{P}_{a,b})$ could be defined as a bounded operator for any $t > 0$; now boundedness for large $t > 0$ is obvious from the spectrum of $\tilde{M}_{a,b}$. Geometrically, it is intuitively clear that the strong rotation combined with expansion in the $z_1$ direction allows integral curves of $\dot{z} = -Mz$ to exit and re-enter the unit ball multiple times, and this is reflected in the isolated regions of boundedness for $\exp(-t\tilde{P}_{a,b})$.

Finally, we see clearly that the eigenvalues determine, to a large extent, whether $\exp(-t\tilde{P}_{a,b})$ is bounded for $t\in\Bbb{C}$ and $|t|$ large: if $A$ were a normal operator with the same eigenvalues as $\tilde{P}_{a,b}$ indicated by Corollary \ref{cor.spectrum}, then the sector demarcated by dotted lines is precisely the set of $t \in \Bbb{C}$ for which $\exp(-tA)$ would be a bounded operator. Since the boundedness of $\exp(-tA)$ for a normal operator $A$ is determined solely by its eigenvalues, we see that the set of $t\in\Bbb{C}$ for which $\exp(-t\tilde{P}_{a,b})$ is bounded strongly resembles, for $|t|$ large, the set indicated when considering only the eigenvalues of $\tilde{P}_{a,b}$.

\bibliographystyle{acm}
\bibliography{MicrolocalBibliography2}

\end{document}